\documentclass[12pt]{amsart}
\pretolerance=4000
\setlength{\topmargin}{-.20in}
\setlength{\textheight}{9in}
\setlength{\textwidth}{7in}
\setlength{\headheight}{26pt}
\setlength{\headsep}{20pt}
\setlength{\oddsidemargin}{-0.25in}
\setlength{\evensidemargin}{-0.25in}
\usepackage[margin=1in]{geometry}
\usepackage{bbm}
\usepackage{dsfont}
\usepackage{relsize}
\usepackage{amsthm}
\usepackage[breaklinks]{hyperref}
\usepackage{autonum}
\usepackage{blindtext}
\usepackage{amsmath,amssymb}
\usepackage[usenames,dvipsnames,svgnames]{xcolor}
\usepackage{mathrsfs}
\usepackage{csquotes}
\usepackage{mathtools}
\usepackage{tcolorbox}
\usepackage{enumitem}
\usepackage[]{hyperref}
\hypersetup{colorlinks=true,linkcolor=WildStrawberry,citecolor=cyan}
\newtheorem{thm}{Theorem}[section]

\newtheorem{lem}[thm]{Lemma}

\theoremstyle{hypothesis}

\theoremstyle{definition}
\newtheorem{defn}[thm]{Definition}
\theoremstyle{remark}
\newtheorem{rem}[thm]{Remark}
\theoremstyle{example}

\numberwithin{equation}{section}
%\usepackage[style=numeric-comp]{biblatex}
% MATH ------------------------------------------------------------------------
\makeatletter
\renewcommand\paragraph{\@startsection{paragraph}{4}{\z@}%
	{-2.5ex\@plus -1ex \@minus -.25ex}%
	{1.25ex \@plus .25ex}%
	{\normalfont\normalsize\centering\bfseries}}
\makeatother
\setcounter{secnumdepth}{4}
\setcounter{tocdepth}{4}
\renewcommand\paragraph{\@startsection{paragraph}{4}{\z@}%
	{-2.5ex\@plus -1ex \@minus -.25ex}%
	{1.25ex \@plus .25ex}%
	{\normalfont\normalsize\centering\bfseries}}
\newcommand{\To}{\longrightarrow}
\makeatother
\setcounter{secnumdepth}{4}
\setcounter{tocdepth}{4}
\begin{document}
	\title[Fully nonlinear degenerate equations with applications to Grad equations]
	{FULLY NONLINEAR DEGENERATE EQUATIONS WITH APPLICATIONS TO GRAD EQUATIONS}
	\author[P.\,Oza]
	{ Priyank Oza}
	\address{Priyank\, Oza \hfill\break
		Indian Institute of Technology Gandhinagar \newline
		Palaj, Gandhinagar Gujarat, India-382355.}
	\email{priyank.k@iitgn.ac.in, priyank.oza3@gmail.com}
	\thanks{Submitted \today.  Published-----.}
	\subjclass[2021]{35J25, 35J60, 35J70, 35D40}
	\keywords{Fully nonlinear degenerate elliptic equations, viscosity solution, Pucci's extremal operator, Dirichlet boundary value problem}
	
	\begin{abstract}
		We consider a class of degenerate elliptic fully nonlinear equations with applications to Grad equations: 
		\begin{align}
			\begin{cases}
				|Du|^\gamma \mathcal{M}_{\lambda,\Lambda}^+\big(D^2u(x)\big)=f\big(|u\geq u(x)|\big) &\text{ in }\Omega\\
				u=g &\text{ on }\partial\Omega,
			\end{cases}
		\end{align}
		where $\gamma\geq 1$ is a constant, $\Omega$ is a bounded domain in $\mathbb{R}^N$ with $C^{1,1}$ boundary. We prove the existence of a $W^{2,p}$-viscosity solution to the above equation, which degenerates when the gradient of the solution vanishes.
	\end{abstract}
	
	\maketitle

%{\hypersetup{linkcolor=black}
%	\textbf{\tableofcontents}}	
%	\tableofcontents
	\section{\textbf{Introduction}}
   We study the following degenerate problem:
	\begin{align}\label{eq 1.1}
		\begin{cases}
			|Du|^\gamma \mathcal{M}_{\lambda,\Lambda}^+\big(D^2u(x)\big)=f\big(|u\geq u(x)|\big) &\text{ in }\Omega\\
			u=g &\text{ on }\partial\Omega,
		\end{cases}
	\end{align}
	where $\gamma\geq 1$ is a constant, $\Omega$ is a bounded domain in $\mathbb{R}^N$ with $C^{1,1}$ boundary, $|.|$ denotes the Lebesgue measure in $\mathbb{R}^N,$ $f : [0,|\Omega|]\To \mathbb{R}$ is a non-decreasing, non-negative continuous function and $u:\Omega\To \mathbb{R}.$ Here, $\mathcal{M}_{\lambda,\Lambda}^+$ is the Pucci's extremal operator. In our setting, by $u\geq u(x),$ we mean,
	\begin{align}
		\{\omega\in\Omega: u(\omega)\geq u(x)\}
	\end{align}
	called the superlevel sets of $u.$ We establish the existence of a $W^{2,p}$-viscosity solution (also known as $L^p$-viscosity solution) to \eqref{eq 1.1}. We mention that the notion of $W^{2,p}$-viscosity solution was defined by Caffarelli et al. \cite{Caffar}. The case when $\gamma=0$ in \eqref{eq 1.1}, the existence of a $W^{2,p}$-viscosity solution is established by L. Caffarelli and I. Tomasetti \cite{Caffar1}.
		
	H. Grad \cite{Grad} was the first to introduce such equations, which appear in plasma physics, called \enquote{Grad equations}. More specifically, the author studied the following equation in $3$-dimension:
	\begin{align}
		\Delta \Psi=F(V,\Psi,\Psi',\Psi''),
	\end{align}
	where the right hand side (RHS) is a second order differential operator in $\Psi(V)$ for a surface $\Psi=$ constant. Here, $\Psi'(V)$ denotes the derivative with respect to the volume and $\Psi(V)$ denotes the inverse function to $V(\Psi),$ the volume within $\Psi.$ The author further pointed out that plasma equations could be further simplified using $u^*$ given by
	\begin{align}
		u^*(t)\coloneqq \inf\bigg\{s:|u<s|\geq t\bigg\}.
	\end{align} 
	These also appear as Queer Differential Equations in the literature and have a wide range of applications. For instance, they appear in modeling plasma, specifically to study plasma confined in a toroidal container. The equations also have connections to financial mathematics. Further, these equations have been widely studied by several researchers. R. Temam \cite{Temam} was the first in this direction to investigate the problem of the form \eqref{eq 1.1} for the Laplacian. The author established the existence of a solution to 
	\begin{align}
		\Delta u=g\big(|u<u(x)|,u(x)\big)+f(x)
	\end{align}
exploiting the properties of directional derivatives of $u^*.$
	One can also see J. Mossino and R. Temam \cite{Temam2}, and also P. Laurence and E. Stredulinsky \cite{Laure1, Laure2} as well as the references therein for a brief introduction.
	
	We point out that in all of the above-mentioned research works, the problem was studied using variational methods. In a recent paper, L. Caffarelli and I. Tomasetti \cite{Caffar1} studied the equation similar to J. Mossino and R. Temam \cite{Temam2} for fully nonlinear uniformly elliptic operators using the viscosity approach. More precisely, they considered the following problem:
	\begin{align}\label{eq caf}
		\begin{cases}
			F\big(D^2u(x)\big)=f\big(|u\geq u(x)|\big) &\text{ in }\Omega\\
			u=g &\text{ on }\partial\Omega,
		\end{cases}
	\end{align}
	for convex, uniformly elliptic operator $F.$ Authors proved the existence of a $W^{2,p}$-viscosity solution, $u$ to \eqref{eq caf} satisfying the following estimate:
	\begin{align}
		\|u\|_{W^{2,p}(\Omega)}\leq C\big[\|u\|_{\infty,\Omega}+\|g\|_{W^{2,p}(\Omega)}+\|f\big(|u\geq u(x)|\big)\|_{p,\Omega}\big].
	\end{align}

We refer to \cite{Felmer 18, Felmer 19, Oza Hopf, Quaas, Quaas 2, Tyagi 3, Tyagi 1, Tyagi 2} for the existence and qualitative questions pertaining to extremal Pucci's equations.
On the other hand, equations concerning gradient degenerate fully nonlinear elliptic operators have been investigated widely in the last decade. The pioneering works in this direction are due to I. Birindelli and F. Demengel. Authors proved several important results for these operators in a series of papers: comparison principle and Liouville-type results \cite{Birin 1}, regularity and uniqueness of eigenvalues and eigenfunctions \cite{Birin 3, Birin 2}, $C^{1,\alpha}$ regularity in the radial case \cite{Birin 4}. Further, the equations of the form
\begin{align}\label{eq Imb}
	|Du|^\gamma F(D^2u)=f \text{ in }B_1,
\end{align}
when $\gamma\geq 0$ is a constant, $f\in L^\infty(B_1,\mathbb{R}),$ were investigated by C. Imbert and L. Silvestre \cite{Imbert}. More precisely, the authors established interior $C^{1,\alpha}$ regularity of solutions for equations of the form \eqref{eq Imb}. One may also see \cite{Oza Gradient Degeneracy} for variable exponent degenerate mixed fully nonlinear local and nonlocal equations.
	\noindent Motivated by the above works and recently by the work of L. Caffarelli $\&$ I. Tomasetti \cite{Caffar1}, it is natural to ask the following question:\\
	
	\noindent\textbf{\textit{Question:}} \textit{Do we have the existence of a viscosity solution to \eqref{eq 1.1}?}\\
	
	The aim of this paper is to answer this question affirmatively. The crucial difference to our problem from \cite{Caffar1} is due to the fact that $|Du|^\gamma F(D^2u)$ degenerates along the set of critical points, $\mathcal{C}\coloneqq\{x:Du(x)=0\}.$ The problem is challenging due to the following reasons:
	\begin{enumerate}[label=\upshape{(C\arabic*)}, ref=(C\arabic*)]
		\item\label{C1} The RHS of \eqref{eq 1.1} is a function of measure of superlevel sets. This makes the problem nonlocal.
		\item\label{C2} The LHS of \eqref{eq 1.1} is degenerate. The fundamental theory of $L^p$-viscosity solutions does not work directly here since it requires the uniform ellipticity of the operator. Also, when $f\in C(\Omega),$ the problem can be discussed in the $C$-viscosity sense but in the case of discontinuous data, when $f\in L^p(\Omega),$ the problem needs to be treated in the $L^p$-viscosity sense. We point out that this situation occurs while approximating the RHS of \eqref{eq 1.1}. 
	\end{enumerate} 
	
	We use the $L^p$-viscosity solution approach for Monge-Amp\'ere equation as in \cite{Brand, Caffar1} to \eqref{eq 1.1}. To handle the above mentioned challenges, we first consider the following approximate problem:
	\begin{align}\label{eq 1.1'}
		\begin{cases}
			|Du|^\gamma \mathcal{M}_{\lambda,\Lambda}^+\big(D^2u(x)\big)+\mathlarger{\mathlarger\varepsilon}\Delta u=f\big(|u\geq u(x)|\big) &\text{ in }\Omega\\
			u=g &\text{ on }\partial\Omega,
		\end{cases}
	\end{align}
     for $\mathlarger{\mathlarger\varepsilon}>0.$ 
Further, using the approximations in the RHS of the equation and exploiting the results available for uniform elliptic operators, for instance, Theorem \ref{Winter} and Theorem \ref{Caffar} (see next), we establish the existence of a viscosity solution to the approximate problem \eqref{eq 1.1'}. This yields the existence of a viscosity solution to \eqref{eq 1.1}. More precisely, using the idea of Amadori et al. \cite{Brand}, we first get the existence of a $W^{2,p}$-viscosity solution to the approximate problem \eqref{eq 1.1'} by invoking Theorem 2.1 \cite{Caffar1}. We recall that the estimate established in \cite{Caffar1} is not adequate to claim the uniform bound on the $W^{2,p}$-viscosity solution of \eqref{eq 1.1'}. To show the existence of a solution to the original problem \eqref{eq 1.1}, we seek the uniform bound on the solutions of \eqref{eq 1.1'}, which is crucial in approaching $\mathlarger{\mathlarger\varepsilon}\To 0^+.$ 
We invoke the Alexandroff-Bakelman-Pucci (ABP) estimates from Caffarelli et al. \cite{Caffar} to sort this issue. These estimates play a crucial role in obtaining uniform bounds on the $W^{2,p}$-viscosity solutions to \eqref{eq 1.1'}. 
	 
Throughout the paper, we consider $\Omega$ to be a bounded $C^{1,1}$ domain in $\mathbb{R}^N,$ $N\geq 2.$ 
%{\color{red}Next, we list the hypothesis, which is used in this paper. The following hypothesis is known as uniform exterior cone condition.
%		\begin{enumerate}[label=\upshape{(H\arabic*)}, ref=(H\arabic*)]
%		\item\label{H1} There exist $\theta_0\in(0,\pi)$ and $r_0>0$ such that for each $y\in\partial\Omega$ there is a rotation $\mathcal{R}=\mathcal{R}(y)$ satisfying
%	
%		\begin{align}
%			\overline{\Omega}\cap B_{r_0}(y)\subset y+\mathcal{R}\Sigma_{\theta_0},
%		\end{align}
%	where 
%	\begin{align}
%		\Sigma_{\theta_0}=\{x\in\mathbb{R}^N:x_n\geq |x|\cos\theta_0\}
%	\end{align}
%	\end{enumerate}
%	We recall that \ref{H1} is stronger than the exterior cone condition but is weaker than smooth boundary condition, see \cite{Ferrari} for the details.}
The main result of this paper is the following:
	\begin{thm}\label{Main1}
		Let $\gamma\geq 1$ be a constant. Let $\Omega\subset\mathbb{R}^N$ be a bounded $C^{1,1}$ domain. Let $f \in C\big([0,|\Omega|],\mathbb{R}\big)$ be a non-decreasing, non-negative function and $g\in W^{2,p}(\Omega)\cap C(\overline{\Omega}),$ $p>N.$ Consider the problem
		\begin{align}\label{eq Main}
			\begin{cases}
				|Du|^\gamma \mathcal{M}_{\lambda,\Lambda}^+\big(D^2u(x)\big)=f\big(|u\geq u(x)|\big) &\text{ in }\Omega\\
				u=g &\text{ on }\partial\Omega.
			\end{cases}
		\end{align}
		Then there exists a $W^{2,p}$-viscosity solution of (\ref{eq Main}). Moreover, the solution satisfies the following estimate:
		\begin{align}
			\|u\|_{W^{2,p}(\Omega)}\leq C\bigg(\|u\|_{\infty,\Omega}+\|g\|_{W^{2,p}(\Omega)}+f(|u\geq u(x)|)\bigg),  
		\end{align}
		where $C>0$ is a constant.
	\end{thm}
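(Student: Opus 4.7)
The strategy, following the roadmap sketched in the introduction, is a double approximation scheme. For each $\varepsilon>0$ I would first perturb (\ref{eq Main}) to the uniformly elliptic problem (\ref{eq 1.1'}), and within that perturbation freeze the nonlocal right-hand side so that the Caffarelli--Tomasetti existence theory applies to each frozen equation. A Schauder fixed point argument then recovers a $W^{2,p}$-viscosity solution $u_\varepsilon$ of (\ref{eq 1.1'}). Finally, using ABP-type estimates from \cite{Caffar}, I would derive bounds on $u_\varepsilon$ that are uniform in $\varepsilon$ and pass to the limit $\varepsilon\to 0^+$ via the stability of $L^p$-viscosity solutions.

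In more detail, define $T\colon C(\overline{\Omega})\to C(\overline{\Omega})$ by sending $v$ to the unique $W^{2,p}$-viscosity solution $u$ of
\begin{equation*}
|Du|^\gamma \mathcal{M}^+_{\lambda,\Lambda}(D^2 u) + \varepsilon \Delta u = f\bigl(|v\geq v(x)|\bigr) \text{ in }\Omega,\qquad u = g \text{ on } \partial\Omega.
\end{equation*}
Existence for this frozen problem is furnished by Theorem 2.1 of \cite{Caffar1}, since the operator $(p,M)\mapsto |p|^\gamma \mathcal{M}^+_{\lambda,\Lambda}(M) + \varepsilon\,\mathrm{tr}(M)$ is convex and uniformly elliptic for fixed $\varepsilon>0$, and the right-hand side is a fixed function in $L^\infty(\Omega)$. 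The same theorem delivers a $W^{2,p}$ bound, hence (via $W^{2,p}\hookrightarrow C^{1,\alpha}$ for $p>N$) a $C^{1,\alpha}$ bound, so $T$ sends a sufficiently large ball of $C(\overline{\Omega})$ into a relatively compact subset. Continuity of $T$ reduces to the $L^p$-continuity of $v\mapsto f(|v\geq v(x)|)$ along uniformly convergent sequences, which I would deduce from monotonicity and continuity of $f$ together with the continuity in measure of the superlevel sets of the limit of an equicontinuous family. Schauder's fixed point theorem then yields $u_\varepsilon$.

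The main obstacle is that the estimate from \cite{Caffar1} degenerates as $\varepsilon\to 0^+$ and thus gives no uniform control on $\|u_\varepsilon\|_{W^{2,p}}$. To remedy this, I would apply the ABP inequality from \cite{Caffar} directly to the approximate equation in order to bound $\|u_\varepsilon\|_{\infty}$ purely in terms of $\|g\|_\infty$ and $\|f(|u_\varepsilon\geq u_\varepsilon(x)|)\|_{L^p(\Omega)}$, independently of $\varepsilon$; the right-hand side here is \emph{a priori} bounded by $\max f$ since $f$ is continuous on $[0,|\Omega|]$. Combined with the interior $W^{2,p}$ estimate from the same reference, this yields a uniform $W^{2,p}$ bound on $u_\varepsilon$. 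Extracting a subsequence converging weakly in $W^{2,p}$ and strongly in $C^{1}$, the monotonicity and continuity of $f$ give $L^p$-convergence of the right-hand sides $f(|u_{\varepsilon_n}\geq u_{\varepsilon_n}(x)|)$; standard $L^p$-viscosity stability then passes the equation to the limit $\varepsilon\to 0^+$, producing the desired $W^{2,p}$-viscosity solution of (\ref{eq Main}) satisfying the stated estimate.
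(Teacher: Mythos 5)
Your overall roadmap coincides with the paper's: add $\mathlarger{\mathlarger\varepsilon}\Delta u$ to regain uniform ellipticity, freeze the nonlocal right-hand side, obtain $u_\varepsilon$ by a fixed point argument, bound $u_\varepsilon$ uniformly in $\varepsilon$ via ABP, and let $\mathlarger{\mathlarger\varepsilon}\to 0^+$. The fixed-point step and the ABP step are correctly identified and are essentially what the paper does (the paper invokes Schaefer's theorem rather than Schauder's, and obtains existence for the frozen problem from Winter's Theorem 4.6 rather than from Theorem 2.1 of \cite{Caffar1}, which concerns the genuinely nonlocal equation rather than a fixed $L^p$ right-hand side; the paper also inserts a further inner approximation $h_v^i(x)=f\bigl(i\int_0^{1/i}|v\geq v(x)-t|\,dt\bigr)$ of $h_v$, which you skip --- that step is arguably dispensable since $h_v\in L^\infty(\Omega)\subset L^p(\Omega)$ already).

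The genuine gap in your write-up is the final sentence: \emph{``standard $L^p$-viscosity stability then passes the equation to the limit $\varepsilon\to 0^+$.''} This is precisely the step where the degeneracy bites, and no off-the-shelf stability theorem applies. The stability result you would want to cite (Theorem 3.8 of \cite{Caffar}, reproduced in the paper as Theorem \ref{Caffar}) requires the limiting operator $F$ to be \emph{uniformly elliptic}, whereas the limit $|Du|^\gamma\mathcal{M}_{\lambda,\Lambda}^+(D^2u)$ is degenerate along $\{Du=0\}$. The paper instead runs a bespoke contradiction argument following \cite{Brand}: assuming $u$ fails to be an $L^p$-viscosity supersolution, it produces a test function $\phi$ with $D\phi\neq 0$ and a ball $B(x_0,r)$ on which $|D\phi|^\gamma\mathcal{M}_{\lambda,\Lambda}^+(D^2\phi)-f(|u\geq u(x_0)|)\geq\alpha>0$ a.e.; one then perturbs $\phi$ to $\phi_\varepsilon=\phi-\psi_\varepsilon$ so that the same strict inequality holds for the $\varepsilon$-perturbed operator and $\phi_\varepsilon\to\phi$ uniformly, and derives a contradiction from the location of the minimum of $u_\varepsilon-\phi_\varepsilon$. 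The restriction to test functions with nonvanishing gradient (built into the definition of viscosity solution for $|Du|^\gamma F(D^2u)$, Definitions 2.1--2.3) is what makes this argument go through despite the degeneracy, and it is exactly the point your ``standard stability'' phrasing hides. A complete proof must either reproduce this construction or supply some other device tailored to the degenerate operator; citing generic $L^p$-viscosity stability does not close the step. (The paper additionally shows that the limit $u$ is independent of the subsequence of $\varepsilon$'s via a comparison argument; you do not address this, though it is not needed merely to assert existence.)
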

	
	\begin{rem}
		By Sobolev embedding theorem we have that the solution is $C^{1,\alpha}(\overline{\Omega})$ regular for any $\alpha<1.$ 
	\end{rem}

	The organization of the paper is as follows. In Section 2, we recall the basic definitions and several key results used in the ensuing sections of the paper. Section 3 is devoted to the proof of our main result. Here, we sketch the plan of our proof:
	\begin{enumerate}
		\item [(i)] Approximate the left-hand side (LHS), i.e., the operator $|Du|^\gamma \mathcal{M}_{\lambda,\Lambda}^+\big(D^2u\big)$ by $|Du|^\gamma \mathcal{M}_{\lambda,\Lambda}^+\big(D^2u\big)+\mathlarger{\mathlarger\varepsilon}\Delta u$ for $\mathlarger{\mathlarger\varepsilon}>0.$
		\item [(ii)] Fix a Lipschitz function $v$ in the right hand side of \eqref{eq 1.1'}.
		\item [(iii)] Construct a sequence of $L^p$-functions converging to right hand side (for fixed Lipschitz function $v$) and obtain a sequence of solutions.
%		\item [(iv)] For each member of the above built sequence, construct a sequence of $C_c(\Omega)$ functions approximating that function and thus get a sequence of solutions.
		\item [(iv)] Obtain the existence of solution to equation pertaining $|Du|^\gamma \mathcal{M}_{\lambda,\Lambda}^+\big(D^2u\big)+\mathlarger{\mathlarger\varepsilon}\Delta u$ for fixed Lipschitz function $v$ in the RHS.
		\item [(v)] Use Theorem 2.1 \cite{Caffar1} (an application of Schaefer fixed point Theorem) to show the existence of a solution to \eqref{eq 1.1'}.
		\item [(vi)] Establish the existence of a $W^{2,p}$-viscosity solution to \eqref{eq Main}.
	\end{enumerate}
	
	\section{\textbf{Preliminaries}}
We recall that a continuous mapping $F: S^N\To \mathbb{R}$ is \textit{uniformly elliptic} if:\\
For any $A\in \mathcal{S}^N,$ where $\mathcal{S}^N$ is the set of all $N\times N$ real symmetric matrices, there exist two positive constants $\Lambda\geq\lambda>0$ s.t.
\begin{align}\label{structure}
	\lambda\|B\|\leq F(A+B)-F(A)\leq N\Lambda\|B\| \text{ for all positive semi-definite }B\in\mathcal{S}^N,
\end{align} 
and $\|B\|$ is the largest eigenvalue of $B.$ Here, we have the usual partial ordering: $A\leq B$ in $\mathcal{S}^N$ means that $\langle A\xi,\xi\rangle \leq \langle B\xi, \xi\rangle$ for any $\xi \in \mathbb{R}^N.$ In other words, $B-A$ is positive semidefinite, where $\|B\|$ is the largest eigenvalue of $B.$

Let $S\in \mathcal{S}^N$ then for the given two parameters $\Lambda\geq\lambda>0,$ Pucci's maximal operator is defined as follows:
\begin{align}\label{M+}
	\mathcal{M}_{\lambda,\Lambda}^+(S)\coloneqq \Lambda\displaystyle{\sum_{e_i\geq 0}e_i}+\lambda\displaystyle{\sum_{e_i< 0}e_i
	},
\end{align}
where $\{e_i\}_{i=1}^N$ are the eigenvalues of $S.$ This operator is uniformly elliptic and subadditive, i.e.,
 \begin{align}
 	\mathcal{M}_{\lambda,\Lambda}^+(A+B)\leq \mathcal{M}_{\lambda,\Lambda}^+(A)+\mathcal{M}_{\lambda,\Lambda}^+(B),
 \end{align}
for $M,N\in\mathcal{S}^N.$ Clearly, for $\lambda=\Lambda=1,$ $\mathcal{M}_{\lambda,\Lambda}^+=\Delta,$ i.e., classical Laplace operator.	

Next, we recall the notion of a viscosity solution. M. G. Crandall and P.-L. Lions \cite{Crand} were the first to introduce the concept of a viscosity solution. Now, we recall the definition of \textit{continuous viscosity solution} to the following equation:
\begin{align}\label{eq 2.1}
	|Du|^\gamma F\big(D^2u(x)\big)=f \text{ in }\Omega,	 
\end{align}
	  for $f\in C(\Omega).$
	\begin{defn}\cite{Birin 1}
		Let $u:\overline{\Omega}\To\mathbb{R}$ be an upper semicontinuous (USC) function in $\Omega.$ Then $u$ is called a \textit{viscosity subsolution} of \eqref{eq 2.1} if 
		\begin{align}
			|D\varphi(x)|^\gamma F\big(D^2\varphi(x)\big)\geq f(x),	 
		\end{align}
	whenever $\varphi\in C^2(\Omega)$ and $x\in \Omega$ is a local maximizer of $u-\varphi$ with $D\varphi\neq \textbf{0}\in\mathbb{R}^N.$ 
	\end{defn}

%	\begin{defn}
%		An upper semicontinuous (USC) function $u$ is called a \textit{viscosity subsolution} to (\ref{eq 2.1}) if
%		\begin{align}
%			|D\varphi|^\gamma F\big(D^2\varphi(x)\big) \geq f(x)
%		\end{align}
%		for any $x$ and $C^2$ supertangent $\varphi$ to $u$ at $x$ with $D\varphi\neq 0.$
%	\end{defn}
	
	\begin{defn}\cite{Birin 1}
		Let $u:\overline{\Omega}\To\mathbb{R}$ be a lower semicontinuous (LSC) function in $\Omega.$ Then $u$ is called a \textit{viscosity supersolution} of \eqref{eq 2.1} if
				\begin{align}
			|D\psi(x)|^\gamma F\big(D^2\psi(x)\big)\leq f(x),	 
		\end{align}
		whenever $\psi\in C^2(\Omega)$ and $x\in \Omega$ is a local minimizer of $u-\psi$ with $D\psi\neq \textbf{0}\in\mathbb{R}^N.$ 
	\end{defn}
%	\begin{defn}
%		A lower semicontinuous (LSC) function $v$ is called a \textit{viscosity supersolution} to (\ref{eq 2.1}) if
%		\begin{align}
%			|D\psi|^\gamma F\big(D^2\psi(x)\big) \leq f(x)
%		\end{align}
%		for any $x$ and $C^2$ subtangent $\psi$ to $v$ at $x$ with $D\psi\neq \textbf{0}\in\mathbb{R}^N.$
%	\end{defn}
	\begin{defn}\cite{Birin 1}
		A continuous function $u$ is said to be a \textit{viscosity solution} to (\ref{eq 2.1}) if it is a supersolution as well as subsolution to (\ref{eq 2.1}).
	\end{defn}	
	
	\noindent Let $h\in L^p(\Omega),$ $g\in W^{2,p}(\Omega)\cap C(\overline{\Omega})$ for $p>N.$ Let us consider the problem
	\begin{align}\label{eq og}
		\begin{cases}
			|Du|^\gamma \mathcal{M}_{\lambda,\Lambda}^+(D^2u)=h &\text{ in }\Omega,\\
			u=g &\text{ on }\partial\Omega.
		\end{cases}
	\end{align} 
	We mention that the classical definition of $W^{2,p}$-viscosity solution can not be applied for (\ref{eq og}), due to the lack of uniform ellipticity. Consider the problem:
\begin{align}\label{eq app}
	\begin{cases}
		|Du|^\gamma \mathcal{M}_{\lambda,\Lambda}^+\big(D^2u\big)+\mathlarger{\mathlarger\varepsilon}\Delta u=h &\text{ in }\Omega\\
		u=g &\text{ on }\partial\Omega,
	\end{cases}
\end{align}
for $p\in\mathbb{R}^N.$	%\begin{defn}\label{Mollifiers}\cite{Kesavan}
	%The functions defined by the sequence $\rho_n(x)=n^N\rho(nx), x \in \mathbb{R}^N$, $n\geq 1$, where
	%\begin{align*}
	%\rho(x)=
	%\begin{cases}
	%c \exp\left(\frac{-1}{1-|x|^2}\right), & |x|<1\\
	%0, & |x|\geq 1,
	%\end{cases}
	%\end{align*}
	%are called \textbf{mollifiers}. Note that here $c$ is a positive constant such that 
	%\begin{align*}
	%c^{-1}=\int\limits_{|x|\leq 1} \exp\left(\frac{-1}{(1-|s|^2)}\right)ds,
	%\end{align*}
	%so that $\int\limits_{\mathbb{R}^N}\rho(x)dx=1$.
	%\end{defn}
	%We define a sequence $\mathcal{P}_k^{+^{(n)}}$ as follows:
	%\begin{align*}
	%\mathcal{P}_k^{+^{(n)}}(D^2u(x))=\int\limits_{\mathbb{R}^N}\rho^{(n)}(x-y)\mathcal{P}_k^{+}(D^2u(y))dy, 
	%\end{align*}
	%where $\mathcal{P}_k^{+}(D^2u(y))$ is extended by $0$ outside $\Omega$ and $\rho^{(n)}$ is a sequence of mollifiers, see Definition \ref{Mollifiers}. Then we have\\
	%(i) $\mathcal{P}_k^{+^{(n)}}(D^2u(x))\in C^\infty(\mathbb{R}^N)$;\\
	%(ii)$\mathcal{P}_k^{+^{(n)}}(D^2u(x))$ is convex in $\mathbb{R}^N$\\
	%(iii)
%In order to introduce the notion of $W^{2,p}$-viscosity solution, consider the following equation:
%\begin{align}\label{eq 2.1}
%	G(Du,D^2u)=f\text{ in } \Omega,
%\end{align}
%where $G$ is a uniformly elliptic operator.
	Motivated by Caffarelli et al. \cite{Caffar} and Ishii et al. \cite{Ishii}, we define the $L^p$-viscosity subsolution (supersolution) to ($\ref{eq app}$) as follows: 

	\begin{defn} \label{ND}
		Let $u$ be an USC (respectively, LSC) function on $\overline{\Omega}.$ We say that $u$ is an $L^p$-viscosity subsolution (respectively, supersolution) to $\eqref{eq app}$ if $u\leq g$ (resp., $u\geq g$) on $\partial\Omega$ and $F(D^2u)+h\leq 0$ (resp., $\geq 0$) in $L^p$-viscosity sense, i.e., for all $\phi\in W^{2,p}(\Omega),$
		\begin{align}
			\text{ess }\displaystyle{\liminf_{x\To y}}\big(|D\phi(x)|^\gamma \mathcal{M}_{\lambda,\Lambda}^+\big(D^2\phi(x)\big)+\mathlarger{\mathlarger\varepsilon}\Delta \phi(x)-h(x)\big)\geq 0
		\end{align}
		\begin{align}
			\bigg(\text{resp., ess }\displaystyle{\limsup_{x\To y}}\big(|D\phi(x)|^\gamma \mathcal{M}_{\lambda,\Lambda}^+\big(D^2\phi(x)\big)+\mathlarger{\mathlarger\varepsilon}\Delta \phi(x)-h(x)\big)\leq 0\bigg),
		\end{align}
		for $y\in \Omega,$ the point of local maxima (respectively, minima) to $u-\phi.$ 
	\end{defn}
	We say that any continuous function $u$ is an $L^p$-viscosity solution to $(\ref{eq app})$ if it is both $L^p$-viscosity subsolution and supersolution to $(\ref{eq app}).$
	Now, we state a result concerning the existence and uniqueness of $W^{2,p}$-viscosity solution to the operator $F$ under certain hypotheses. The following result is due to N. Winter \cite{Winter}.
	\begin{thm}[Theorem 4.6 \cite{Winter}]\label{Winter}
		Let $\Omega$ be a bounded $C^{1,1}$ domain in $\mathbb{R}^N.$ Let $F(p,M)$ be a uniformly elliptic operator and convex in $M$-variable. Also, let $F(0,0)\equiv 0$ in $\Omega,$ $f\in L^p(\Omega)$ and $g\in W^{2,p}(\Omega)$ for $p>N.$ Then there exists a unique $W^{2,p}$-viscosity solution to
		\begin{align}\label{eq 2.9}
			\begin{cases}
				F(Du,D^2u)=f &\text{ in }\Omega\\
				u=g &\text{ on }\partial\Omega.
			\end{cases}
		\end{align}
		Moreover, $u \in W^{2,p}(\Omega)$ and 
		\begin{align}
			\|u\|_{W^{2,p}(\Omega)}\leq C\bigg(\|u\|_{\infty,\Omega}+\|g\|_{W^{2,p}(\Omega)}+\|f\|_{p,\Omega}\bigg),
		\end{align}
		for some positive constant $C.$
	\end{thm}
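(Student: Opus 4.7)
The proof will assemble three standard pieces of $L^p$-viscosity theory for convex, uniformly elliptic equations: the Alexandroff–Bakelman–Pucci (ABP) bound, Caffarelli's $W^{2,p}$ regularity (interior together with a boundary version), and stability of $L^p$-viscosity solutions under $L^p$ convergence of the right-hand side.

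First I would reduce to zero boundary data by setting $v = u - g$. Because $g \in W^{2,p}(\Omega)$ and $F(0,0) = 0$, the equation rewrites as $\tilde F(x, Dv, D^2v) = f(x)$, where $\tilde F(x, q, M) := F(q + Dg(x), M + D^2g(x))$ is still uniformly elliptic and convex in $M$, with the shift by $Dg, D^2g$ producing only an additional $L^p$ contribution on the right-hand side. Next I would mollify the data to obtain $f_k \in C^\infty(\overline\Omega)$ with $f_k \to f$ in $L^p(\Omega)$, so that the approximating problems have continuous right-hand sides and can be handled by the classical $C$-viscosity framework.

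For each $k$, existence of a continuous viscosity solution $v_k$ to the Dirichlet problem follows from Perron's method together with the comparison principle for uniformly elliptic equations with continuous data, the barriers at $\partial\Omega$ being supplied by the $C^{1,1}$ regularity of the boundary. The ABP estimate then gives a uniform bound $\|v_k\|_\infty \leq C\|f_k\|_{N,\Omega}$, and Caffarelli's interior $W^{2,p}$ estimate (applicable because $\tilde F$ is convex in $M$) combined with its boundary counterpart yields
\begin{align*}
    \|v_k\|_{W^{2,p}(\Omega)} \leq C\bigl(\|v_k\|_{\infty,\Omega} + \|f_k\|_{p,\Omega}\bigr)
\end{align*}
with $C$ independent of $k$. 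Since $p > N$, Sobolev embedding gives compactness in $C^{1,\alpha}(\overline\Omega)$ and weak compactness in $W^{2,p}$, so a subsequence $v_k \to v$ in $C^{1,\alpha}(\overline\Omega)$ and weakly in $W^{2,p}$. The stability theorem for $L^p$-viscosity solutions lets me pass to the limit: the essential-limit formulation recorded in Definition \ref{ND} tolerates $L^p$ convergence of $f_k$, so $v$ is an $L^p$-viscosity solution of the limiting equation, the estimate passes to the limit, and $u = v + g$ solves \eqref{eq 2.9}. Uniqueness follows from the comparison principle for $L^p$-viscosity solutions of convex, uniformly elliptic equations.

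The main obstacle I expect is the global $W^{2,p}$ estimate up to the boundary. Straightening $\partial\Omega$ by a $C^{1,1}$ diffeomorphism produces an equation whose coefficients retain only bounded measurable second derivatives, and one has to verify that Caffarelli's perturbation argument — formulated originally for interior balls — still compares the straightened equation to a constant-coefficient convex operator on half-balls with $L^N$-oscillation small enough to iterate. A secondary delicate point is checking the $L^p$-stability: the test function $\phi \in W^{2,p}$ must be chosen independent of $k$, and the essential $\liminf$/$\limsup$ in Definition \ref{ND} must be used to absorb the $L^p$ (rather than uniform) convergence of $f_k$ without destroying the viscosity inequalities.
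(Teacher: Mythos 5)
This statement is not proved in the paper at all: it is quoted verbatim as Theorem~4.6 of Winter's paper \cite{Winter} and used as a black box. So there is no internal proof to compare against. What you have written is an attempt to reconstruct Winter's theorem from first principles, and it should be judged on those terms.

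Your outline has the right architecture (reduce to zero boundary data, mollify $f$, solve the regularized problems by Perron's method, obtain uniform bounds via ABP and $W^{2,p}$ estimates, pass to the limit by the stability theorem, conclude uniqueness by comparison). The stability step, with $L^p$ convergence of the right-hand side and a $k$-independent $W^{2,p}$ test function, is exactly the mechanism in Theorem~3.8 of \cite{Caffar} (recorded here as Theorem~\ref{Caffar}), and your reduction $\tilde F(x,q,M)=F(q+Dg(x),M+D^2g(x))$ is sound: uniform ellipticity gives $|F(A+B)-F(A)|\leq N\Lambda\|B\|$, so the $D^2g$-shift contributes an $L^p$ term that can be absorbed into the source. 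However, what you flag as ``the main obstacle I expect'' --- the global $W^{2,p}$ estimate up to the boundary for a $C^{1,1}$ domain, i.e.\ running Caffarelli's perturbation argument on half-balls after flattening --- is not a secondary verification; it is precisely the content of Winter's Theorem~4.6 and occupies the bulk of \cite{Winter}. Acknowledging that ``one has to verify'' it leaves the proposal circular: the hardest step is outsourced to the very theorem being proved. As a standalone proof there is therefore a genuine gap at the boundary estimate; as a roadmap explaining how the cited theorem fits into the $L^p$-viscosity machinery the paper relies on, the sketch is accurate.

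One further point worth noting: the theorem as stated allows $F$ to depend on the gradient slot $p$, but the paper's definition of uniform ellipticity in \eqref{structure} constrains only the $M$-variable. For the $W^{2,p}$ theory (both interior and boundary) to go through with gradient dependence, one needs an additional structure condition (typically Lipschitz continuity in $p$); your reduction step implicitly uses such a condition when absorbing the $Dg$-shift into the right-hand side, so if you intend this as a self-contained argument you should make that hypothesis explicit.
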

	%	Next, we state a result, which gives the estimate on the solution to Dirichlet problem concerning $\mathcal{P}_k^+$ operators.
	
	\begin{thm}[Theorem 1.1 \cite{C1beta}]\label{Birin boundary}
		Let $\Omega$ be a bounded domain with $C^2$-boundary. Let $\gamma\geq 0$ and $F$ be a uniformly elliptic operator and $f\in C(\overline{\Omega}),$ $g\in C^{1,\beta}(\partial\Omega)$ for some $\beta\in(0,1).$ Then any viscosity solution $u$ of
		\begin{align}
			\begin{cases}
				|Du|^\gamma F\big(D^2u\big)=f &\text{ in }\Omega\\
				u=g \text{ on }\partial\Omega
			\end{cases}
		\end{align}
	is in $C^{1,\alpha}$ for some $\alpha=\alpha(\lambda,\Lambda,\|f\|_{\infty,\Omega},N,\Omega,,\beta).$ Moreover, $u$ satisfies the following estimate
	\begin{align}
		\|u\|_{C^{1,\alpha}(\overline{\Omega})}\leq C\left(\|g\|_{C^{1,\beta}(\partial\Omega)}+\|u\|_{\infty,\Omega}+\|f\|_{\infty,\Omega}^{\frac{1}{1+\gamma}}\right),
	\end{align}
for some positive constant $C=C(\alpha).$
	\end{thm}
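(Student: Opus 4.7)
The plan is to establish boundary $C^{1,\alpha}$ regularity and then glue with the interior $C^{1,\alpha}$ estimate of Imbert--Silvestre to conclude. First I would normalize: after replacing $u$ by $u/K$ with $K=\|u\|_{\infty,\Omega}+\|f\|_{\infty,\Omega}^{1/(1+\gamma)}+\|g\|_{C^{1,\beta}(\partial\Omega)}+1$, the rescaled function satisfies $|Dv|^{\gamma}F_K(D^2 v)=\tilde f$ with a uniformly elliptic operator $F_K$ of the same ellipticity constants and $\|v\|_\infty,\|\tilde f\|_\infty\leq 1$. The factor $\|f\|_{\infty}^{1/(1+\gamma)}$ in the final estimate will emerge from this normalization, because the natural scaling $u\mapsto u(rx)/r^{1+\alpha}$ for this degenerate operator multiplies the right-hand side by $r^{1-\alpha(1+\gamma)}$, which is exactly the invariance that forces the $1/(1+\gamma)$ exponent.

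Next I would localize and flatten the boundary. Fix $x_0\in\partial\Omega$, and use the $C^2$ regularity of $\partial\Omega$ to construct a $C^2$ diffeomorphism $\Phi$ that straightens $\partial\Omega$ near $x_0$ into a portion of $\{x_N=0\}$, transforming the equation into one of the same structural form on $B_1^+$ with a (still uniformly elliptic, though now with variable coefficients in the $M$-variable) operator, a continuous right-hand side, and $C^{1,\beta}$ boundary data. Then I would subtract a $C^{1,\beta}$ extension $G$ of $g$ (using, e.g., Whitney extension), setting $w=v-G$, so that $w=0$ on $\{x_N=0\}\cap B_1$. The function $w$ now solves an equation of the form $|D(w+G)|^{\gamma}\widehat F(D^2 w+D^2 G)=\tilde f$, which can be rewritten, using the structural inequalities \eqref{structure} on $F$ and the $W^{2,p}$-bound on $G$, as a two-sided inequality in terms of Pucci extremal operators applied to $w$ with a controlled bounded right-hand side.

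The core step is the flat-boundary improvement of flatness: if $w$ satisfies $|Dw|^{\gamma}F(D^2 w)=h$ in $B_1^+$ with $w=0$ on $\{x_N=0\}\cap B_1$, $\|w\|_\infty\leq 1$, and $\|h\|_\infty\leq\varepsilon_0$ small, then there exists $\rho\in(0,1/2)$, $\alpha\in(0,1)$, and an affine map $\ell(x)=a x_N$ with $|a|\leq C$ such that $|w(x)-\ell(x)|\leq\rho^{1+\alpha}$ in $B_\rho^+$. I would prove this by a compactness/contradiction argument: if the conclusion failed, a sequence $w_k$ with $\varepsilon_0\to 0$ would converge locally uniformly to some $w_\infty$ solving the homogeneous degenerate equation $|Dw_\infty|^{\gamma}F_\infty(D^2 w_\infty)=0$ in $B_1^+$ with zero flat boundary data, where $F_\infty$ is uniformly elliptic. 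Standard results for such homogeneous degenerate equations (Birindelli--Demengel, Imbert--Silvestre) yield boundary $C^{1,\alpha_0}$ regularity of $w_\infty$, and since $w_\infty=0$ on $\{x_N=0\}$, the linearization at $0$ is purely normal, giving the affine $\ell$ and contradicting the negation. The stability of $L^p$-viscosity solutions in the $\gamma\geq 0$ setting (the degeneracy only simplifies the passage to the limit: test functions with $D\varphi\neq 0$ are stable under uniform convergence) underwrites the compactness.

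Iterating the one-step improvement at geometric scales $\rho^k$ gives, at each boundary point $x_0$, an affine function $\ell_{x_0}$ with $|u(x)-\ell_{x_0}(x)|\leq C|x-x_0|^{1+\alpha}$ and $|\nabla\ell_{x_0}|\leq C$; comparing $\ell_{x_0}$ at nearby boundary points yields a $C^{\alpha}$ modulus for the boundary gradient. Finally I would combine this boundary flatness with the interior $C^{1,\alpha}$ regularity of Imbert--Silvestre through the standard gluing argument on dyadic balls distinguishing interior and near-boundary points: for $x\in\Omega$ at distance $d$ from $\partial\Omega$, apply the interior estimate on $B_{d/2}(x)$ rescaled, and use the boundary-affine comparison at the nearest $x_0\in\partial\Omega$ to control the oscillation of $Du$. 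This yields $u\in C^{1,\alpha}(\overline\Omega)$ with the stated estimate. The hard part is the compactness in the improvement-of-flatness step: the degeneracy $|Du|^{\gamma}$ is not uniformly elliptic, so I cannot invoke the $W^{2,p}$-viscosity machinery directly on the limit; I have to rely on the $C$-viscosity stability in the Birindelli--Demengel framework and on the fact that the limit equation's boundary regularity at flat boundaries, with zero data, reduces essentially to a uniformly elliptic problem once one knows $Dw_\infty\neq 0$ near the boundary (which follows from Hopf-type lemmas for the limiting homogeneous equation).
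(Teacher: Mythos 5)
This statement is not proved in the paper: Theorem~\ref{Birin boundary} is quoted verbatim from Theorem~1.1 of Birindelli--Demengel \cite{C1beta} and is invoked as a black box, so there is no internal proof in this manuscript to compare your argument against. Your outline (normalize, flatten the boundary, subtract a $C^{1,\beta}$ extension of $g$, prove a one-step improvement of flatness by compactness, iterate, and glue with the Imbert--Silvestre interior estimate) is indeed the standard template in the Birindelli--Demengel line of work, and the scaling computation you give to justify the exponent $\|f\|_\infty^{1/(1+\gamma)}$ is correct.

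That said, the weak link you yourself flag is genuinely wrong as stated. You propose to establish boundary $C^{1,\alpha}$ regularity of the blow-up limit $w_\infty$ (solving $|Dw_\infty|^\gamma F_\infty(D^2w_\infty)=0$ with flat zero boundary data) by invoking a Hopf-type lemma to guarantee $Dw_\infty\neq 0$ near the boundary, so that the equation becomes uniformly elliptic there. This cannot be made to work: $w_\infty$ may vanish identically (and in the improvement-of-flatness scheme, that is actually the expected regime), in which case $Dw_\infty\equiv 0$, and even when $w_\infty\not\equiv 0$ there is no sign condition available that would activate Hopf. The mechanism used in the literature is different and more robust: one shows that any $C$-viscosity solution of the homogeneous degenerate equation $|Du|^\gamma F(D^2u)=0$ is automatically a $C$-viscosity solution of the uniformly elliptic equation $F(D^2u)=0$. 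For test functions touching at a point where $D\varphi\neq 0$ this is immediate by dividing; the subtle point is handling touching points with $D\varphi=0$, which is done by perturbing the test function (see the relevant lemma in Imbert--Silvestre \cite{Imbert}, and its boundary-adapted version in \cite{C1beta}). Once $w_\infty$ is recognized as a solution of $F_\infty(D^2w_\infty)=0$ with zero data on $\{x_N=0\}$, boundary $C^{1,\alpha}$ regularity follows from the classical (Krylov/Caffarelli) uniformly elliptic theory, and the affine $\ell(x)=ax_N$ you need is the normal derivative. You should replace the Hopf argument with this reduction; the rest of your scheme (compactness, stability of viscosity solutions under uniform convergence for test functions with nonvanishing gradient, geometric iteration, gluing with the interior estimate) is sound in outline.
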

%{\color{red}Next, we recall the Alexandroff-Bakelman-Pucci (ABP) estimate for \eqref{eq 1.1} due to Davila et al. \cite{Davila}.
%\begin{thm}[Theorem 1 \cite{Davila}]\label{ABP Davila}
%	Let $\gamma\geq 0.$ For any viscosity subsolution (resp. supersolution) $u\in C(\overline{\Omega})$ of \eqref{eq 1.1} in $\{u>0\},$ $u$ satisfies
%	\begin{align}
%		\displaystyle{\sup_\Omega}\,u\leq \displaystyle{\sup_{\partial\Omega}}\, u^++C.\text{diam}(\Omega)\|f^-\|^{\frac{1}{1+\gamma}}_{N,\Gamma^+(u^+)}\qquad \left(\text{resp. } \displaystyle{\sup_\Omega}\,u^-\leq \displaystyle{\sup_{\partial\Omega}}\, u^-+C.\text{diam}(\Omega)\|f^+\|^{\frac{1}{1+\gamma}}_{N,\Gamma^+(u^-)}\right)
%	\end{align} 
%where $\Gamma^+(u)=\{x\in\Omega:\exists\,q \text{ s.t .} u(y)\leq u(x)+\langle q,y-x\rangle \forall y\in\Omega\}.$
%\end{thm} 
%\begin{rem}
%	The above result is true when the RHS of the equation is continuous since the notion of $L^p$-viscosity solution is not there for degenerate equations.
%\end{rem}}
	The following result plays an important role in Step 5 of the proof of our main result.
	\begin{thm}[Theorem 3.8 \cite{Caffar}]\label{Caffar}
		Let $F_i,$ $F$ be uniformly elliptic and $p>N.$ Let $f, f_i \in L^p(\Omega).$ Let $u_i\in C(\Omega)$ be $W^{2,p}$-viscosity subsolutions (supersolutions) to 
		\begin{align}
			F_i(D^2u_i)=f_i \text{ in }\Omega,
		\end{align}
		for $i=1,2,\dots.$ Assume that $u_i\To u$ locally uniformly in $\Omega.$ Also, assume that if for each $B(x_0,r)\subset\Omega$ and $g\in W^{2,p}(B(x_0,r)),$ we have
		\begin{align}
			\big\|\big(F_i(D^2u_i)-f_i(x)-F(D^2(u))+f(x)\big)^+\big\|_{p,B(x_0,r)}\To 0
		\end{align}
		\begin{align}
			\bigg(\big\|\big(F_i(D^2u_i)-f_i(x)-F(D^2(u))+f(x)\big)^-\big\|_{p,B(x_0,r)}\To 0\bigg).
		\end{align}
		Then $u$ is a $W^{2,p}$-viscosity subsolution (supersolution) to 
		\begin{align}
			F(D^2u)=f 	\text{ in } \Omega.
		\end{align}
	\end{thm}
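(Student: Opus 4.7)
The plan is to argue by contradiction. Suppose $u$ fails to be an $L^p$-viscosity subsolution of $F(D^2u)=f$ in $\Omega$: then there exist $\phi\in W^{2,p}_{\mathrm{loc}}(\Omega)$, a local maximum $y\in\Omega$ of $u-\phi$, and a constant $\delta>0$ witnessing the failure of the essential-liminf inequality required by Definition \ref{ND} at $y$. I would first replace $\phi$ by $\phi(x)+\kappa|x-y|^2$, with $\kappa>0$ chosen small enough that $2\kappa N\Lambda<\delta/2$; by the uniform ellipticity bound \eqref{structure} this perturbs $F(D^2\phi)$ pointwise by at most $2\kappa N\Lambda$, so it preserves the sign violation of the essential liminf with margin at least $\delta/2$, and it promotes $y$ to a \emph{strict} local maximum of $u-\phi$ on some ball $\overline{B(y,r_0)}\subset\Omega$.

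With $y$ a strict local maximum, the argument proceeds in three steps. \emph{Step 1:} Since $u_i\to u$ uniformly on $\overline{B(y,r_0)}$ and $y$ is a strict max of $u-\phi$ there, for large $i$ the continuous function $u_i-\phi$ attains its maximum on $\overline{B(y,r_0)}$ at an interior point $x_i$, and $x_i\to y$. \emph{Step 2:} Apply the convergence hypothesis with the $W^{2,p}$ test function $g=\phi\big|_{B(y,r_0)}$ to obtain $\|(F_i(D^2\phi)-f_i-F(D^2\phi)+f)^+\|_{p,B(y,r_0)}\to 0$; passing to a subsequence gives a.e.\ convergence of the positive part to zero, and Egorov's theorem yields subsets of $B(y,r_0)$ of arbitrarily small defect on which convergence is uniform. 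Combined with the fact that $F(D^2\phi)-f$ lies below $-\delta/2$ on a set of positive measure in every neighborhood of $y$, this transfers a "bad sign" of $F_i(D^2\phi)-f_i$, say below $-\delta/4$, onto sets of positive measure in every neighborhood of $x_i$ for $i$ large. \emph{Step 3:} Since $u_i$ is an $L^p$-viscosity subsolution of $F_i(D^2u_i)=f_i$ and $x_i$ is a local maximum of $u_i-\phi$, Definition \ref{ND} forces the essential liminf of $F_i(D^2\phi)-f_i$ at $x_i$ to be nonnegative, contradicting the margin obtained in Step 2. The supersolution statement is entirely symmetric using the $(\cdot)^-$ hypothesis.

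The main obstacle is Step 2: the $L^p$-viscosity condition is phrased pointwise through an essential liminf, whereas the convergence assumption gives only $L^p$-norm smallness of the discrepancy $F_i(D^2\phi)-f_i-F(D^2\phi)+f$. The bridge is the combined use of subsequence extraction and Egorov's theorem, which converts global $L^p$-smallness into the localized positive-measure information near $x_i$ needed to defeat the subsolution property of $u_i$. A smaller but crucial technical point is that the preservation of the $\delta$-gap under the quadratic perturbation relies only on the uniform ellipticity bound \eqref{structure}, which keeps the strict-maximum reduction compatible with the rest of the argument; everything else in the scheme (locating $x_i$, passing to subsequences, invoking Egorov) is standard once that bridge is in place.
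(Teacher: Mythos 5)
The paper does not give a proof of Theorem \ref{Caffar}; it is imported directly from Caffarelli--Crandall--Kocan--\'Swi\k{e}ch \cite{Caffar} (with a typographical slip in the statement: the convergence hypothesis should involve the test function $g$, i.e.\ $F_i(D^2 g)-f_i-F(D^2 g)+f$, as you correctly read it). So there is no internal proof to compare against, and your argument must stand on its own; it has a genuine gap.

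The gap is a quantifier inversion in Step~2. Egorov gives, for each tolerance $\eta>0$, a set $Z_\eta\subset B(y,r_0)$ with $|Z_\eta|<\eta$ off which $\big(F_i(D^2\phi)-f_i-F(D^2\phi)+f\big)^+<\delta/4$ for all large $i$. To defeat the subsolution property of $u_i$ at $x_i$ via Definition \ref{ND} you need
$\operatorname{ess\,liminf}_{x\to x_i}\big(F_i(D^2\phi(x))-f_i(x)\big)<0$
for a \emph{fixed} $i$, which requires a bad set of positive measure inside \emph{every} ball $B(x_i,\rho)$ as $\rho\to 0$. The positive-measure sets $S_\rho\subset B(x_i,\rho)$ on which $F(D^2\phi)-f<-\delta/2$ supplied by the ess-liminf failure at $y$ satisfy $|S_\rho|\to 0$, whereas the Egorov defect $|Z_\eta|$ is a fixed positive number once $\eta$ (hence $i$) is chosen; as soon as $|S_\rho|<|Z_\eta|$, nothing rules out $S_\rho\subset Z_\eta$, so $S_\rho\setminus Z_\eta$ may be null. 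You produce, for each neighborhood of $y$, a bad set for all large $i$; the definition demands, for each $i$, a bad set in every neighborhood of $x_i$. Egorov cannot reverse this order, and no purely measure-theoretic manipulation of $L^p$-smallness will.

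The device that actually bridges $L^p$-smallness of the right-hand side to a pointwise maximum-principle conclusion is the Alexandroff--Bakelman--Pucci estimate (Proposition~3.3 of \cite{Caffar}); this is exactly how the cited Theorem~3.8 is proved. One checks that $w_i:=u_i-\phi$ is an $L^p$-viscosity subsolution of the uniformly elliptic inequality $\mathcal{M}^+_{\lambda,\Lambda}(D^2 w_i)\geq -(F_i(D^2\phi)-f_i)$ on $B:=B(y,r_0)$ (this uses only $F_i(A+B)-F_i(A)\leq\mathcal{M}^+_{\lambda,\Lambda}(B)$), whence ABP gives
\begin{align}
\sup_{B} w_i \leq \sup_{\partial B} w_i + C\,r_0\,\big\|\big(F_i(D^2\phi)-f_i\big)^+\big\|_{p,B}.
\end{align}
The convergence hypothesis with $g=\phi$, combined with the negativity of $F(D^2\phi)-f$ near $y$, drives the last term to zero, while the strict-maximum construction and the locally uniform convergence $u_i\to u$ give $\sup_{B}w_i-\sup_{\partial B}w_i\geq \eta_0>0$ for all large $i$: contradiction. (To run this cleanly one should phrase the failure of the subsolution property as $F(D^2\phi)-f\leq -\delta$ a.e.\ on an open neighborhood of $y$, which is the formulation actually used in \cite{Caffar}; the ess-liminf formulation in Definition \ref{ND} is not quite what appears there.) Your Steps~1 and~3 are sound; Step~2 should be replaced wholesale by the ABP estimate.
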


	\section{\textbf{Proof of our main result}}
	\noindent \textbf{Proof of Theorem \ref{Main1}.}
	The original problem is
	\begin{align}\label{eq 3.4}
		\begin{cases}
			|Du|^\gamma F\big(D^2u(x)\big)=f\big(|u\geq u(x)|\big) &\text{ in }\Omega\\
			u=g &\text{ on }\partial\Omega.
		\end{cases}
	\end{align}
	
	\noindent \textbf{\tcbox{Step 1: Approximating the LHS}} 
	Consider the approximate problem:
	\begin{align}\label{eq v}
		\begin{cases}
			|Du|^\gamma \mathcal{M}_{\lambda,\Lambda}^+\big(D^2u\big)+\mathlarger{\mathlarger\varepsilon}\Delta u=f\big(|u\geq u(x)|\big) &\text{ in }\Omega\\
			u=g &\text{ on }\partial\Omega,
		\end{cases}
	\end{align}
%	We re-write the above problem as
%\begin{align}
%		\begin{cases}
%			|p+Du|^\gamma F\big(D^2u\big)+f\big(|u\geq u(x)|\big)=0 &\text{ in }\Omega\\
%			u=g &\text{ on }\partial\Omega.
%		\end{cases}
%\end{align}
for $\mathlarger{\mathlarger\varepsilon}>0.$ Since, $Gu:=|Du|^\gamma \mathcal{M}_{\lambda,\Lambda}^+\big(D^2u\big)+\mathlarger{\mathlarger\varepsilon}\Delta u$ is uniformly elliptic, so by Theorem 2.1 \cite{Caffar1}, we immediately have the existence of a $W^{2,p}$-viscosity solution (say $u_{\mathlarger\varepsilon}$) to \eqref{eq v} satisfying the following estimate:
	\begin{align}
		\|u_{\mathlarger\varepsilon}\|_{W^{2,p}(\Omega)}\leq C\bigg(\|u_{\mathlarger\varepsilon}\|_{\infty,\Omega}+\|g\|_{W^{2,p}(\Omega)}+\|f\big(|u\geq u(x)|\big)\|_{p,\Omega}\bigg).
	\end{align}
	By the above estimate, one can not directly claim the uniform bound on $u_{\mathlarger\varepsilon},$ which is crucial in order to pass the limit $\mathlarger{\mathlarger\varepsilon}\To 0$ to establish the existence of $W^{2,p}$-viscosity solution to \eqref{eq 3.4}. To overcome this difficulty, we further approximate problem \eqref{eq v}.\\

	\noindent \textbf{\tcbox{Step 2: Fixing a Lipschitz function in the RHS}}
	Next, following the arguments similar to \cite{Caffar1}, we fix a Lipschitz function $v$ in $\Omega,$ i.e., consider $h_v(x)\coloneqq f\big(|v\geq v(x)|\big)$ and reduce to the following problem:
	\begin{align}\label{eq propp}
		\begin{cases}
			|Du|^\gamma \mathcal{M}_{\lambda,\Lambda}^+\big(D^2u\big)+\mathlarger{\mathlarger\varepsilon}\Delta u=h_v &\text{ in }\Omega\\
			u=g &\text{ on }\partial\Omega.
		\end{cases}
	\end{align}
	%Under the same assumptions of Theorem \ref{Main1}, (\ref{eq propp}) has a unique $L^p$-viscosity solution. 
	%\end{prop}
	%\noindent\textbf{Proof of Proposition \ref{Propp}.}
	
	\noindent \textbf{\tcbox{Step 3: Approximating RHS by a sequence of $L^p(\Omega)$ functions}}
	We consider a sequence of functions $\big\{h_v^{i}\big\}_{i=1}^\infty$ defined as
	\begin{align}
		h_v^{i}(x)\coloneqq f\bigg(i\int_0^{\frac{1}{i}}|v\geq v(x)-t|dt\bigg).
	\end{align}
	We approximate the function $$h_v(x)(=f\big(|v\geq v(x)|\big))$$ in the R.H.S. of \eqref{eq propp} by the sequence of functions $\big\{h_v^{i}\big\}_{i=1}^\infty.$ Hence, we have the following approximate problem:
	\begin{align}\label{eq 3.5}
		\begin{cases}
			|Du|^\gamma \mathcal{M}_{\lambda,\Lambda}^+\big(D^2u\big)+\mathlarger{\mathlarger\varepsilon}\Delta u=h^{i}_v &\text{ in }\Omega\\
			u=g &\text{ on }\partial\Omega,
		\end{cases}
	\end{align}
	for $i\geq 1.$
	Since $\big\{h^{i}_v\big\} \in L^p(\Omega).$ For each $i,$ by Theorem \ref{Winter}, we have the existence of a unique $W^{2,p}$-viscosity solution to (\ref{eq 3.5}).\\
	
%	We further approximate the RHS of \eqref{eq 3.5} by a sequence of $C_c(\Omega)$ (continuous functions with compact support in $\Omega$) functions.\\
%	
%	
%	\noindent \textbf{\tcbox{Step 4: Approximating RHS of \eqref{eq 3.5} by a sequence of $C_c(\Omega)$ functions}} 
%	In addition, by density argument, there exists a sequence of functions $(h^{i}_v)_j$ in $C_c(\Omega)$ which approximates $h^{i}_v$ in $L^p$-norm. Further, we define the following approximated problem:
%	\begin{align}\label{eq 3.6}
%		\begin{cases}
%			|Du|^\gamma F\big(D^2u\big)+\mathlarger{\mathlarger\varepsilon}\Delta u=\big(h^{i}_v\big)_j &\text{ in }\Omega\\
%			u=g &\text{ on }\partial\Omega.
%		\end{cases}
%	\end{align}
	\begin{lem}\label{lem1}
		There exists a unique $W^{2,p}$-viscosity solution to \eqref{eq 3.5}. Moreover, it satisfies the following estimate:
		\begin{align}
			\|u^i_{\mathlarger\varepsilon}\|_{W^{2,p}(\Omega)}\leq C\bigg(\displaystyle{\max_{\partial \Omega}g}+\|g\|_{W^{2,p}(\Omega)}+f(|\Omega|)|\Omega|^{\frac{1}{p}}\bigg).
		\end{align}
	\end{lem}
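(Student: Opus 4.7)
The plan is to combine Theorem \ref{Winter} (for existence and a $W^{2,p}$-estimate featuring $\|u^i_\varepsilon\|_\infty$ on the right) with a pointwise control of $h^i_v$ and an ABP-based bound on $\|u^i_\varepsilon\|_\infty$ in order to absorb that $L^\infty$-term into data-only quantities. First I would verify the hypotheses of Theorem \ref{Winter} for the operator $G_\varepsilon(p,M):=|p|^\gamma \mathcal{M}^+_{\lambda,\Lambda}(M)+\varepsilon\operatorname{tr}(M)$: the $\varepsilon$-perturbation supplies a $p$-independent lower ellipticity, convexity of $\mathcal{M}^+$ together with $|p|^\gamma\geq 0$ yields convexity in $M$, and $G_\varepsilon(0,0)=0$. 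Since $h^i_v\in L^p(\Omega)$ and $g\in W^{2,p}(\Omega)$, Theorem \ref{Winter} then produces a unique $W^{2,p}$-viscosity solution $u^i_\varepsilon$ of \eqref{eq 3.5} satisfying
\begin{equation*}
\|u^i_\varepsilon\|_{W^{2,p}(\Omega)}\leq C_1\bigl(\|u^i_\varepsilon\|_{\infty,\Omega}+\|g\|_{W^{2,p}(\Omega)}+\|h^i_v\|_{p,\Omega}\bigr).
\end{equation*}

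For the pointwise bound on the right-hand side, note that $i\int_0^{1/i}|\{v\geq v(x)-t\}|\,dt$ is a weighted average of measures of superlevel sets of $v$, each bounded by $|\Omega|$, so it takes values in $[0,|\Omega|]$. By monotonicity and non-negativity of $f$, we obtain $0\leq h^i_v(x)\leq f(|\Omega|)$, whence $\|h^i_v\|_{p,\Omega}\leq f(|\Omega|)|\Omega|^{1/p}$.

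Next I would remove $\|u^i_\varepsilon\|_{\infty,\Omega}$ from the Winter estimate by the ABP machinery of \cite{Caffar}. For the upper bound, the constant $w\equiv \max_{\partial\Omega}g$ satisfies $G_\varepsilon w=0\leq h^i_v$ and dominates $g$ on $\partial\Omega$; the comparison principle, which holds because $G_\varepsilon$ has uniform lower ellipticity $\varepsilon$, forces $u^i_\varepsilon\leq \max_{\partial\Omega}g$ in $\Omega$. For the lower bound, the ABP estimate applied on the supersolution side yields $\inf_\Omega u^i_\varepsilon\geq\min_{\partial\Omega}g-C_2\|h^i_v\|_{N,\Omega}$. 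Substituting both, together with the pointwise bound on $h^i_v$, back into the Winter estimate then delivers the claimed inequality.

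The delicate point is the ABP estimate just invoked. Although the $\varepsilon\Delta$-term fixes the lower ellipticity of $G_\varepsilon$, its upper ellipticity $|p|^\gamma\Lambda+\varepsilon$ grows with $|Du^i_\varepsilon|$, so the standard Caffarelli-Crandall-Kocan-Swiech ABP does not apply off the shelf with a constant independent of the a priori unknown gradient. I intend to circumvent this either via the structural inequality $G_\varepsilon u\geq(\lambda|Du|^\gamma+\varepsilon)\Delta u$, which reduces the task to an ABP for the Laplacian with source $h^i_v/\varepsilon$, or by constructing an explicit paraboloid barrier $w(x)=\min_{\partial\Omega}g-A(R^2-|x-x_0|^2)$ with $A$ chosen large enough that $G_\varepsilon w\geq f(|\Omega|)\geq h^i_v$ and then concluding by comparison.
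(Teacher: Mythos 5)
Your proposal follows the paper's own scheme — Theorem~\ref{Winter} for existence and the $W^{2,p}$-estimate, the pointwise bound $0\leq h^i_v\leq f(|\Omega|)$, and an ABP/comparison step to remove $\|u^i_\varepsilon\|_{\infty,\Omega}$ from the right-hand side — so at the level of strategy the two proofs coincide.

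What you add, correctly, is a caveat the paper silently steps over: $G_\varepsilon(p,M)=|p|^\gamma\mathcal{M}^+_{\lambda,\Lambda}(M)+\varepsilon\operatorname{tr}(M)$ has upper ellipticity of order $(\Lambda|p|^\gamma+\varepsilon)N$, which grows with $|p|$, so Proposition~3.3 of \cite{Caffar} cannot be cited verbatim. Your first workaround is the cleaner of the two: since $\mathcal{M}^+_{\lambda,\Lambda}\geq\lambda\operatorname{tr}$ and $h^i_v\geq 0$, a supersolution of \eqref{eq 3.5} satisfies $\Delta u^i_\varepsilon\leq h^i_v/\varepsilon$ in the viscosity sense, and the Laplacian ABP then gives $\inf_\Omega u^i_\varepsilon\geq\min_{\partial\Omega}g-C\,\varepsilon^{-1}\|h^i_v\|_{N,\Omega}$. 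The $\varepsilon^{-1}$ is acceptable for this lemma, whose constant is only asserted to be uniform in $i$, not in $\varepsilon$. Your paraboloid barrier is also viable but carries the same $\varepsilon$-dependence: near the vertex, where $Dw=0$, the gradient term of $G_\varepsilon w$ contributes nothing and one is forced to take $A$ of order $f(|\Omega|)/\varepsilon$.

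You do, however, inherit the paper's other unflagged gap, in the appeal to Theorem~\ref{Winter}. The same unbounded upper ellipticity means $G_\varepsilon(p,\cdot)$ is not uniformly elliptic in the sense of \eqref{structure} uniformly in $p$, which is what the quoted theorem requires; your hypothesis check verifies only the lower bound $\varepsilon$, convexity in $M$, and $G_\varepsilon(0,0)=0$, and omits the upper bound. A complete argument would either first restrict the gradient to a compact set via an a priori Lipschitz or $C^{1,\alpha}$ bound (for instance via Theorem~\ref{Birin boundary}) before invoking the $W^{2,p}$-theory with ellipticity constants frozen on that set, or cite a version of the theory that tolerates gradient-dependent ellipticity. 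As written, this step is asserted rather than proved in both your proposal and the paper.
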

\begin{proof}
	%By pp. 142 \cite{Pinsky} or pp. 34 \cite{Chen}, we have for any $h\in L^p(\Omega)$,
	%\begin{align}\label{eq 3.2}
	%E_x\bigg[\int\limits_0^{\tau_\Omega} h(B_s)ds\bigg]&=\int_\Omega G_\Omega(x,y)h(y)dy,
	%\end{align}
	%where $G_\Omega(x,y)$ is the Green m easure of Brownian motion
	%&\leq \frac{C.D^{\frac{N}{p'}-(N-2)}}{(N-p'(N-2))^{\frac{1}{p'}}}\|h\|_p\nonumber\\
	%&=K\|h\|_p,\nonumber
	%\end{align}  
	%where $$K=\frac{C.D^{\frac{N}{p'}-(N-2)}}{(N-p'(N-2))^{\frac{1}{p'}}}.$$ Here $C>0$ is a constant and $D$ denotes the diameter of $\Omega$. 
By Theorem \ref{Winter}, we have the existence of a unique $W^{2,p}$-viscosity solution $u^i_{\mathlarger\varepsilon}$ to \eqref{eq 3.5} satisfying the following estimate:
\begin{align}\label{eq 4.9}
	\|u^i_{\mathlarger\varepsilon}\|_{W^{2,p}(\Omega)}\leq C\bigg(\|u^i_{\mathlarger\varepsilon}\|_{\infty,\Omega}+\|g\|_{W^{2,p}(\Omega)}+\|h^{i}_v\|_{p,\Omega}\bigg),
\end{align}
Also, it is easy to observe that
\begin{align}\label{es1}
	\|h_v^{i}\|_{\infty,\Omega}\leq f(|\Omega|),
\end{align}
and
\begin{align}\label{es2}
	\|h_v^{i}\|_{p,\Omega}&=\bigg(\int_\Omega|h_v^{i}(x)|^pdx\bigg)^{\frac{1}{p}}\\ 
	&\leq \|h_v^{i}\|_{\infty,\Omega}|\Omega|^{\frac{1}{p}}\\&\leq f(|\Omega|)|\Omega|^\frac{1}{p},
\end{align}
for each $i\geq 1.$ Thus the sequence of functions $h_v^{i}$ is uniformly bounded. Now, by ABP estimates established in \cite{Caffar} we have
\begin{align}
\displaystyle{\sup_\Omega}\,u^i_{\mathlarger\varepsilon}\leq \displaystyle{\sup_{\partial\Omega}}\,u^i_{\mathlarger\varepsilon}+C\|h_v^i\|_{p,\Omega},
\end{align} 
and similarly for the $\displaystyle{\inf_\Omega{u^i_{\mathlarger\varepsilon}}}.$ More more details, see Proposition 3.3 \cite{Caffar}. Using this along with the estimates \eqref{es1} and \eqref{es2}, we have the following: 
\begin{align}\label{eq 3.10}
	\|u^i_{\mathlarger\varepsilon}\|_{W^{2,p}(\Omega)}\leq \widetilde{C},
\end{align}
where $\widetilde{C}$ is a positive constant independent of $i.$
\end{proof}

%	\noindent{\textbf{\tcbox{Step 5: Establish the existence of solution to \eqref{eq 3.5}}}}
% Next, since $u^i_{\mathlarger\varepsilon,j}$ is bounded in $W^{2,p}(\Omega)$ so we have $u^i_{\mathlarger\varepsilon,j}$ converges weakly in $W^{2,p}(\Omega)$ upto a subsequence. 
%	
%	Moreover, since $p>\frac{N}{2}$, by Rellich-Kondrasov compact embedding theorem, we have the existence of a subsequence of $u^i_{\mathlarger\varepsilon,j}$ (name it $u^i_{\mathlarger\varepsilon,j}$) such that $u^i_{\mathlarger\varepsilon,j} \To u^i_\mathlarger\varepsilon$ in the Lipschitz norm, i.e., $u^i_{\mathlarger\varepsilon,j} \xrightarrow[]{\text{Lip.}} u^i_\mathlarger\varepsilon$.
%	As a consequence of Theorem \ref{Caffar}, it solves (\ref{eq 3.5}). 
%	
%	Moreover, by (\ref{eq 3.10}), we have the following estimate:
%	\begin{align}\label{eq 3.16}
%		\|u^i_\mathlarger\varepsilon\|_{W^{2,p}(\Omega)}\leq C\bigg(\displaystyle{\max_{\partial\Omega}}~g+f(|\Omega|)C_1D^2+\|g\|_{W^{2,p}(\Omega)}+|\Omega|^\frac{1}{p}f(|\Omega|)\bigg).
%	\end{align}
	
	\noindent{\textbf{\tcbox{Step 4: Establish the existence of solution to \eqref{eq propp}}}}
	It further gives that $\{u^i_\mathlarger\varepsilon\}$ is uniformly bounded in $W^{2,p}(\Omega).$ Now, by reflexivity of $W^{2,p}(\Omega),$ $u^i_\mathlarger\varepsilon$ converges weakly in $W^{2,p}(\Omega).$ Moreover, since $p>\frac{N}{2}.$ Using the similar arguments as above, we have the existence of a subsequence such that $u^i_{\mathlarger\varepsilon} \To u_{\mathlarger\varepsilon,v}$ in the Lipschitz norm.
	As a consequence of Theorem \ref{Caffar}, $u_{\mathlarger\varepsilon,v}$ is a $W^{2,p}$-viscosity solution to (\ref{eq propp}).
	%In addition, by (\ref{eq 3.12}) and (\ref{eq 3.2}) together with dominated convergence theorem, we get
	%\begin{align}\label{eq 3.13}
	%u_\mathlarger\varepsilon(x)=E_x\big[g(B_{\tau_\Omega})\big]-E_x\bigg[\int\limits_0^{\tau_\Omega}f|v\geq v(B_s)|ds\bigg], \quad \text{ a.e. } x\in\Omega.
	%\end{align}
	Moreover, $u_{\mathlarger\varepsilon,v}$ satisfies the following estimate:
	\begin{align}
		\|u_{\mathlarger\varepsilon,v}\|_{W^{2,p}(\Omega)}\leq C\bigg(\displaystyle{\max_{\partial\Omega}}~|g|+\|g\|_{W^{2,p}(\Omega)}+\|f(|u\geq u(x)|)\|_{p,\Omega}\bigg).
	\end{align}
	
	\noindent \textbf{\tcbox{Step 5: Establish the existence of solution to \eqref{eq v}}}  
	Further, using Theorem 2.1 \cite{Caffar1} (an application of Schaefer fixed point theorem), we have the existence of a $W^{2,p}$-viscosity solution to \eqref{eq v} for each $0<\mathlarger{\mathlarger\varepsilon}<1,$ say $u_{\mathlarger\varepsilon}.$ Moreover, $u_{\mathlarger\varepsilon}$ satisfies the following estimate:
	\begin{align}\label{eq u_e}
		\|u_{\mathlarger\varepsilon}\|_{W^{2,p}(\Omega)}\leq C\bigg(\displaystyle{\max_{\partial\Omega}}~|g|+\|g\|_{W^{2,p}(\Omega)}+\|f(|u\geq u(x)|)\|_{p,\Omega}\bigg).
	\end{align}

	\noindent{\textbf{\tcbox{Step 6: Establish the existence of solution to \eqref{eq 3.4} on $\mathlarger{\mathlarger\varepsilon}\To 0$}}}
	Since $u_{\mathlarger\varepsilon}$ is uniformly bounded in $W^{2,p}(\Omega)$ so we have that along some subsequence, $u_{\mathlarger\varepsilon}$ converges weakly in $W^{2,p}(\Omega).$ Moreover, by the Rellich-Kondrasov theorem, along some subsequence $u_{\mathlarger\varepsilon}\To u$ in $C(\overline{\Omega})$ to a Lipschitz function $u.$ We further claim that $u$ is an $L^p$-viscosity solution to (\ref{eq 3.4}). We use the idea of \cite{Brand}. We just check the supersolution part. Further, one can check for the subsolution part using the same arguments. Let, if possible, assume that $u$ is not an $L^p$-viscosity supersolution to (\ref{eq 3.4}). Then by definition, there exists a point $x_0\in\Omega$ and a function $\phi\in W^{2,p}(\Omega)$ with $D\phi\neq 0$ such that $u-\phi$ has local minimum at $x_0$ and 
	\begin{align}\label{eq 3.012}
		|D\phi|^\gamma \mathcal{M}_{\lambda,\Lambda}^+(D^2\phi)-f(|u\geq u(x_0)|)\geq \alpha \text{ a.e. in some ball }B(x_0,r),
	\end{align}
	for some constant $\alpha>0.$ In other words, $u-\phi$ restricted to $\overline{B(x_0,r)}$ has a global strict minima at $x_0.$ Next, using the above information, we get a contradiction by constructing a function $\phi_\mathlarger\varepsilon=\phi-\psi_\mathlarger\varepsilon$ corresponding to $u_\mathlarger\varepsilon$ such that
	\begin{align}\label{eq 3.018}
		|D\phi_\mathlarger\varepsilon|^\gamma \mathcal{M}_{\lambda,\Lambda}^+(D^2\phi_\mathlarger\varepsilon)+\mathlarger{\mathlarger\varepsilon}\Delta\phi_\mathlarger\varepsilon-f(|u\geq u(x_0)|)\geq \alpha \text{ a.e. in }B(x_0,r)
	\end{align}
	for small enough $\mathlarger{\mathlarger\varepsilon}>0$ and \begin{align}\label{eq 3.19}
		\phi_\mathlarger\varepsilon\To\phi \text{ uniformly}.
	\end{align}

	Now, since $u_\mathlarger\varepsilon$ is an $L^p$-viscosity solution to (\ref{eq v}) so (\ref{eq 3.018}) implies that $u_\mathlarger\varepsilon-\phi_\mathlarger\varepsilon$ can not attain minimum in the ball $B(x_0,r).$ However, since $u_\mathlarger\varepsilon-\phi_\mathlarger\varepsilon$ is continuous and $B(x_0,r)$ is compact. Therefore, $u_\mathlarger\varepsilon-\phi_\mathlarger\varepsilon$ attains minimum in $\overline{B(x_0,r)}.$ Let it be $x_\mathlarger\varepsilon.$ It gives that $x_\mathlarger\varepsilon\To x_0$ along some subsequence. It further implies that $x_\mathlarger\varepsilon \in B(x_0,r)$ for small enough $\mathlarger\varepsilon,$ which is a contradiction. Thus such a function $\phi$ constructed in \eqref{eq 3.012} does not exist, which proves our claim that $u$ is an $L^p$-supersolution to \eqref{eq 3.4}. Similarly, one can check the subsolution part.

	Next, we show that $u$ is the limit function of the sequence of functions $u_\mathlarger\varepsilon$ as $\mathlarger{\mathlarger\varepsilon}\To 0.$
	Let if possible, $\mathlarger{\mathlarger\varepsilon}_i$ and $\widetilde{\mathlarger{\mathlarger\varepsilon}}_i$ be two sequences approaching $0$ with $u$ and $\widetilde{u}$ being the corresponding limit functions to the sequences, respectively. Up to subsequences, we may assume that
	\begin{align}
		\cdots\leq \widetilde{\mathlarger{\mathlarger\varepsilon}}_{i+1}\leq \mathlarger{\mathlarger\varepsilon}_i\leq \widetilde{\mathlarger{\mathlarger\varepsilon}}_i\leq \mathlarger{\mathlarger\varepsilon}_{i-1}\leq \cdots.
	\end{align}  
	Our aim is to show that $w=u_{\mathlarger{\mathlarger\varepsilon}_i}-u_{{\widetilde{\mathlarger{\mathlarger\varepsilon}}}_{i+1}}\leq 0.$ If we show that $|Dw|^\gamma F(D^2w)+\mathlarger{\mathlarger\varepsilon}\Delta w\geq 0$ in $\Omega$ (in $C$-viscosity sense), we are done.  
	%	\begin{align}
		%	0&\geq \frac{1}{2}\lambda_1(D^2w)+\mathlarger{\mathlarger\varepsilon}\Delta w\\
		%	&\geq \frac{1}{2}\lambda_1(D^2w)+ \mathlarger{\mathlarger\varepsilon}N\lambda_1(D^2w)\text{ in }\Omega.\text{ (since $\lambda_1,\lambda_2,\dots,\lambda_N$)}
		%	\end{align}
	%	Then it further gives
	%	\begin{align}\label{eq 3.0018}
		%		(1+2\mathlarger{\mathlarger\varepsilon }N)\lambda_1(D^2w)\leq 0 \text{ in }\Omega,\nonumber
		%	\end{align}
	%	since for $\lambda_N(D^2w)$, the largest eigenvalue of the Hessian $D^2w$. Thus we have
	%	\begin{align}
		%		\Delta w\leq k\lambda_N(D^2w).
		%	\end{align}
	%	It immediately together with \eqref{eq 3.0018} yields
	%	\begin{align}
		%		0&\leq k(\mathlarger{\mathlarger\varepsilon}+1)\lambda_N(D^2w)+(N-k)\mathlarger{\mathlarger\varepsilon}\lambda_N(D^2w)\nonumber\\
		%		&=
		%		\big(k\mathlarger{\mathlarger\varepsilon}+k+N\mathlarger{\mathlarger\varepsilon}-k\mathlarger{\mathlarger\varepsilon}\big)\lambda_N(D^2w)\nonumber\\
		%		&=(k+N\mathlarger{\mathlarger\varepsilon})\lambda_N(D^2w),\nonumber
		%	\end{align}
	%	\begin{align}
		%		\lambda_1(D^2w)\leq 0 \text{ in }\Omega.
		%	\end{align}
	As by comparison principle, we would immediately get $w\leq 0.$ Therefore, $u_{\mathlarger{\mathlarger\varepsilon}_i}\leq u_{{\widetilde{\mathlarger{\mathlarger\varepsilon}}}_{i+1}}.$ 
	
	Thus, in order to show that $w=u_{\mathlarger{\mathlarger\varepsilon}_i}-u_{{\widetilde{\mathlarger{\mathlarger\varepsilon}}}_{i+1}}\leq 0,$ we only need to show that  $|Dw|^\gamma F(D^2w)+\mathlarger{\mathlarger\varepsilon}_i\Delta w\geq 0.$ As shown above, it immediately gives $w\leq 0.$ Let us assume the contrary, i.e., there exists some point $x_0\in\Omega$ such that for some $\varphi\in C^2(\Omega),$ $w-\varphi$ attains local maxima at $x_0,$ i.e., there exists a ball $B(x_0,r)$ such that
	\begin{align}
		|D\varphi|^\gamma \mathcal{M}_{\lambda,\Lambda}^+(D^2\varphi)+\mathlarger{\mathlarger\varepsilon}_i\Delta \varphi\leq -\alpha \text{ in }B(x_0,r),
	\end{align}
	for some $\alpha>0$ and $w-\varphi=\big(u_{\mathlarger{\mathlarger\varepsilon}_i}-u_{{\widetilde{\mathlarger{\mathlarger\varepsilon}}}_{i+1}}\big)-\varphi=u_{\mathlarger{\mathlarger\varepsilon}_i}-\big(u_{{\widetilde{\mathlarger{\mathlarger\varepsilon}}}_{i+1}}+\varphi\big)$ has a global strict maximum at $x_0$ in $B(x_0,r).$
	Now, consider the function $\Psi=\varphi+u_{\widetilde{{\mathlarger{\mathlarger\varepsilon}}}_{i+1}}.$ Clearly, $\Psi\in W^{2,p}(\Omega)$ and touches $u_{{\mathlarger{\mathlarger\varepsilon}}_i}$ from above at $x_0.$ Also, consider a test function, $\Phi$ for $u_{\widetilde{\mathlarger{\mathlarger\varepsilon}}_{i+1}}$ touching from below with $|D\Phi(x_0)|$ sufficiently larger than $|D\varphi(x_0)|.$ We have
	\begin{align}\label{ch}
		\\&|D\Psi(x_0)|^\gamma \mathcal{M}_{\lambda,\Lambda}^+\big(D^2\Psi(x_0)\big)+\mathlarger{\mathlarger\varepsilon}_i\Delta\Psi(x_0)-f\big(|u\geq u(x_0)|\big)+\alpha\\
		&\leq|D\Psi(x_0)|^\gamma \big(\mathcal{M}_{\lambda,\Lambda}^+\big(D^2\varphi(x_0)\big)+\mathcal{M}_{\lambda,\Lambda}^+\big(D^2\Phi(x_0)\big)\big)+\mathlarger{\mathlarger\varepsilon}_i\Delta\varphi(x_0)+\mathlarger{\mathlarger\varepsilon}_i\Delta \Phi(x_0)-f\big(|u\geq u(x_0)|\big)+\alpha\\
		&=|D\Psi(x_0)|^\gamma \mathcal{M}_{\lambda,\Lambda}^+\big(D^2\varphi(x_0)\big)+\mathlarger{\mathlarger\varepsilon}_i\Delta\varphi(x_0)+|D\Psi(x_0)|^\gamma \mathcal{M}_{\lambda,\Lambda}^+\big(D^2\Phi(x_0)\big)+\mathlarger{\mathlarger\varepsilon}_i\Delta \Phi(x_0)-f\big(|u\geq u(x_0)|\big)+\alpha\\
		&=|D\varphi(x_0)+D\Phi(x_0)|^\gamma \mathcal{M}_{\lambda,\Lambda}^+\big(D^2\varphi(x_0)\big)+\mathlarger{\mathlarger\varepsilon}_i\Delta\varphi(x_0)+\alpha\\
		&\qquad+|D\varphi(x_0)+D\Phi(x_0)|^\gamma \mathcal{M}_{\lambda,\Lambda}^+(D^2\Phi(x_0))+\mathlarger{\mathlarger\varepsilon}_i\Delta \Phi(x_0)-f\big(|u\geq u(x_0)|\big)\\
		&\leq \frac{|D\varphi(x_0)+D\Phi(x_0)|^\gamma}{|D\varphi(x_0)|^\gamma}{\big(-\alpha-\mathlarger{\mathlarger\varepsilon}_i\Delta\varphi(x_0)\big)}+\mathlarger{\mathlarger\varepsilon}_i\Delta\varphi(x_0)+\alpha\\
		&\qquad+\frac{|D\varphi(x_0)+D\Phi(x_0)|^\gamma}{|D\Phi(x_0)|^\gamma}\big(f\big(|u\geq u(x_0)|\big)-\mathlarger{\mathlarger\varepsilon}_i\Delta \Phi(x_0)\big)+\mathlarger{\mathlarger\varepsilon}_i\Delta \Phi(x_0)-f\big(|u\geq u(x_0)|\big)\\
		&\leq \frac{|D\varphi(x_0)+D\Phi(x_0)|^\gamma}{|D\varphi(x_0)|^\gamma}{\big(-\alpha-\mathlarger{\mathlarger\varepsilon}_i\Delta\varphi(x_0)\big)}+\mathlarger{\mathlarger\varepsilon}_i\Delta\varphi(x_0)+\alpha\\
		&\qquad+2^{\gamma-1}\frac{|D\varphi(x_0)|^\gamma+|D\Phi(x_0)|^\gamma}{|D\Phi(x_0)|^\gamma}\big(f\big(|u\geq u(x_0)|\big)-\mathlarger{\mathlarger\varepsilon}_i\Delta \Phi(x_0)\big)+\mathlarger{\mathlarger\varepsilon}_i\Delta \Phi(x_0)-f\big(|u\geq u(x_0)|\big)\\
		&\leq {\big(-\alpha-\mathlarger{\mathlarger\varepsilon}_i\Delta\varphi(x_0)\big)}\left(\frac{|D\varphi(x_0)+D\Phi(x_0)|^\gamma}{|D\varphi(x_0)|^\gamma}-1\right)\\
		&\qquad+\left(f\big(|u\geq u(x_0)|\big)-\mathlarger{\mathlarger\varepsilon}_i\Delta\Phi(x_0)\right)\left(2^{\gamma-1}\frac{\big(|D\varphi(x_0)|^\gamma+|D\Phi(x_0)|^\gamma\big)}{|D\Phi(x_0)|^\gamma}-1\right),
	\end{align}
for all large enough $i\in\mathbb{N}.$ Note that in the second last step we used the fact that for any positive real numbers $a,b$ and $r\geq 1,$ we have 
\begin{align}
|a+b|^r\leq 2^{r-1}\big(|a|^r+|b|^r\big).
\end{align} 
Further, by the choice of test function $\Phi$ made before \eqref{ch}, we have 
	\begin{align}
		F(D^2\Psi)+\mathlarger{\mathlarger\varepsilon}_i\Delta\Psi-f\big(|u\geq u(x_0)|\big)\leq -\alpha<0,		
	\end{align}
 which contradicts the fact that $u_{{{\mathlarger{\mathlarger\varepsilon}}}_{i}}$ is an $L^p$-viscosity solution to \eqref{eq v}. Thus we have that $u_{\mathlarger{\mathlarger\varepsilon}_i}\leq u_{{\widetilde{\mathlarger{\mathlarger\varepsilon}}}_{i+1}}.$ Letting $i\To\infty,$ we get $u\leq \widetilde{u}.$ Also, following the similar arguments, one can show that $u_{{\widetilde{\mathlarger{\mathlarger\varepsilon}}}_{i+1}}\leq u_{\mathlarger{\mathlarger\varepsilon}_i}.$ Thus, we have $\widetilde{u}\leq u$ and hence $u=\widetilde{u}.$\\ Therefore, we have the existence of a $W^{2,p}$-viscosity solution, $u$ to \eqref{eq 3.4}. Moreover, by \eqref{eq u_e}, we have the following estimate:
	\begin{align}
		\|u\|_{W^{2,p}(\Omega)}\leq C\bigg(\|u\|_{\infty,\Omega}+\|g\|_{W^{2,p}(\Omega)}+\|f(|u\geq u(x)|)\|_{p,\Omega}\bigg). 
	\end{align}\qed
	
	\section{\textbf{Funding and/or Conflicts of interests/Competing interests}}
My sincere thanks go out to Prof. Jagmohan Tyagi for his constructive remarks and suggestions. The research was financially supported by Council of Scientific $\&$ Industrial Research (CSIR) under the grant no. 09/1031(0005)/2019--EMR--I.\\
	
There are no conflict of interests of any type. This manuscript does not use any kind of data.	
	
\end{document}